%






\documentclass[sn-mathphys-num]{sn-jnl}


\usepackage{graphicx}%
\usepackage{multirow}%
\usepackage[title]{appendix}%
\usepackage{xcolor}%
\usepackage{listings}%
\usepackage{natbib}
\setcitestyle{square, numbers}

\usepackage{indentfirst,enumerate,cite,amssymb,amsfonts,amsmath,amsthm,mathrsfs,dsfont}
\usepackage{color}
\usepackage{multicol}
\usepackage[utf8]{inputenc}
\setlength{\parskip}{0.25\baselineskip}
\usepackage{ulem}
%
%
%

\newtheorem{theorem}{Theorem}[section]
%

%
%

%

\raggedbottom

%
%
%
%

\newcommand{\R}{\mathbb{R}}


\begin{document}
	
	\title[Boundedness of vector fields on compact Lie groups]{On the Sobolev boundedness of vector fields on compact Riemannian manifolds}


	\author[1]{\fnm{Duván} \sur{Cardona}}\email{ Duvan.CardonaSanchez@UGent.be}
	\equalcont{These authors contributed equally to this work.}
	
	\author[2]{\fnm{Wagner A. A.} \sur{de Moraes}}\email{wagnermoraes@ufpr.br}
	\equalcont{These authors contributed equally to this work.}
	
	\author[2]{\fnm{André} \sur{Pedroso Kowacs}}\email{andrekowacs@gmail.com}
	\equalcont{These authors contributed equally to this work.}
	
	\author*[2]{\fnm{Alexandre} \sur{Kirilov}}\email{akirilov@ufpr.br}
	\equalcont{These authors contributed equally to this work.}
	
	\affil[1]{\orgdiv{Department of Mathematics: Analysis, Logic and Discrete Mathematics; Ghent Analysis \& PDE Center}, \orgname{Ghent University}, \orgaddress{\street{ \ Krijgslaan 281, Building S8}, \city{Ghent}, \postcode{9000}, \country{Belgium}}}
	
	\affil[2]{\orgdiv{Departamento de Matem\'atica}, \orgname{Universidade Federal do Paran\'a}, \orgaddress{\street{ \\ Caixa Postal 19096}, \city{Curitiba}, \postcode{81531-990}, \country{Brasil}}}


	\abstract{We analyze the sharpness of the Sobolev order for left-invariant vector fields on compact Riemannian manifolds. Utilizing techniques from pseudo-differential operator theory and microlocal analysis, we investigate the asymptotic behavior of eigenvalues associated with these vector fields. As an application, we demonstrate the ill-posedness of a class of Cauchy problems involving left-invariant vector fields on compact Lie groups.}

	\keywords{Vector field, Sobolev space, Well-posedness, Fourier analysis, Global hypoellipticity.}
	
	\pacs[MSC Classification]{Primary 58J40 46E35. Secondary 35S10.}
	
	\maketitle
	
	\section{Introduction} 

	The study of global properties of vector fields defined on compact manifolds has attracted the attention of the scientific community for over half a century. Some contributions to this field can be found in \citep{Berga2,Berga3,Petro}. Upon exploring these and related references, it becomes clear that the focus is on the study of global solvability and global hypoellipticity when the smooth manifold under consideration is a torus. This emphasis arises from two motivations: Firstly, the conjecture by S. Greenfield and N. Wallach proposes that globally hypoelliptic vector fields do not exist on compact manifolds that are not diffeomorphic to tori \citep{Forni,GW73}. Secondly, over the past three decades, a robust framework of techniques has been developed specifically for studying vector fields on tori. These techniques rely mainly on the good properties of the Fourier series, the analysis of the asymptotic behavior of eigenvalues of vector fields, and incorporate \textit{a priori} estimates and Diophantine conditions that naturally emerge in this context \citep{GW73,Petro}.
	
	Recent advances in the quantization framework introduced by Ruzhansky and Turunen in \citep{Ruz} have allowed the extension of many of these results from tori to general compact Lie groups. 
	This progress was made possible through the development of new techniques, which are outlined in \citep{Kir1, Kir2, Kir3, Kow, RuzTurWir, RuzhanskyWirth2015}. During their exploration of this new context, the authors came across a relevant question that had not yet been addressed in the literature. This question posed a challenge to the further development of the theory and can be stated as follows:
	
	\begin{quotation}
		\noindent {\it ``Can a left-invariant nonzero vector field $X$ on $G$ be bounded from $L^2_s(G)$ to $L^2_{s-\varepsilon}(G)$, for some $0 < \varepsilon < 1$?''}
	\end{quotation}
	
	Here, $(G,g)$ denotes a compact Riemannian manifold with a metric $g$. For $s \in \mathbb{R}$, the Sobolev space $L^2_s(G)$ is defined as the completion of $C^\infty(G)$ with respect to the norm
	\begin{equation}\label{Sobolev_norm}
		\| f \|_{L^2_s} := \| (\operatorname{Id}-\Delta_G)^{\frac{s}{2}} f \|_{L^2(G)},
	\end{equation}
	where $-\Delta_G$ is the (positive) Laplacian associated with the Riemannian metric $g$.
	
	In \citep{Ruz} the authors proved that any vector field $X$ induces a bounded operator $X : L^2_s(G) \to L^2_{s-1}(G)$, therefore the above question investigates the sharpness of this continuity property.	
	
	In this paper, we provide a negative answer to this question by proving the following theorem:
	
	\begin{theorem}\label{Lemma} 
		Let $(G,g)$ denote a compact Riemannian manifold.
		For $0\leq \varepsilon<1$ and $s\in \mathbb{R}$, any smooth vector field $X\in TG\setminus \{0\}$ does not represent a bounded operator from $L^2_s(G)$ to $L^2_{s-\varepsilon}(G)$.
	\end{theorem}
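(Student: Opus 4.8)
The plan is to exploit that a nonvanishing vector field is a first-order differential operator of \emph{exact} order one: its principal symbol $\sigma_X(x,\xi)=i\langle X(x),\xi\rangle$ does not vanish identically, and at any frequency direction in which it is nonzero one can probe the sharpness of the mapping order by a family of highly oscillatory wave packets. I would argue by contradiction: assuming $X\colon L^2_s(G)\to L^2_{s-\varepsilon}(G)$ were bounded, I would construct functions $f_\lambda$ concentrated near a point $x_0$ with $X(x_0)\neq 0$ and at frequency $\lambda$ in the direction singled out by $X$, for which the ratio $\|Xf_\lambda\|_{L^2_{s-\varepsilon}}/\|f_\lambda\|_{L^2_s}$ grows like $\lambda^{1-\varepsilon}$. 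Since $\varepsilon<1$, this forces the operator norm to be infinite.

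First I would localize and straighten the field. As $X\not\equiv 0$, choose $x_0$ with $X(x_0)\neq 0$; by the flow-box (straightening) theorem there are local coordinates $(x_1,\dots,x_n)$ on a neighborhood $U$ of $x_0$ in which $X=\partial/\partial x_1$ identically. Fix $\psi\in C^\infty_c(U)$ with $\psi\not\equiv 0$ and set
\[
f_\lambda(x)=e^{i\lambda x_1}\,\psi(x),\qquad \lambda>0,
\]
extended by zero to all of $G$. A direct computation gives
\[
Xf_\lambda=\partial_{x_1}\!\left(e^{i\lambda x_1}\psi\right)=i\lambda\,f_\lambda+g_\lambda,\qquad g_\lambda:=e^{i\lambda x_1}\,\partial_{x_1}\psi,
\]
so that $g_\lambda$ is a wave packet of exactly the same type as $f_\lambda$, but with amplitude $\partial_{x_1}\psi$.

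The core estimate is the asymptotic behavior of the Sobolev norms of such packets: for every $t\in\R$ one has $\|f_\lambda\|_{L^2_t(G)}\asymp\lambda^{t}$ as $\lambda\to\infty$, and $\|g_\lambda\|_{L^2_t(G)}\lesssim\lambda^{t}$. To prove this I would use that $(\operatorname{Id}-\Delta_G)^{t/2}$ is an elliptic pseudo-differential operator of order $t$, so that on functions supported in $U$ the intrinsic norm $\|\cdot\|_{L^2_t(G)}$ is equivalent to the Euclidean Sobolev norm $\|\cdot\|_{H^t}$ computed in the chart. Since $\widehat{f_\lambda}(\xi)=\widehat{\psi}(\xi-\lambda e_1)$, one gets
\[
\|f_\lambda\|_{H^t}^2=\int \langle \eta+\lambda e_1\rangle^{2t}\,|\widehat{\psi}(\eta)|^2\,d\eta,
\]
and the rapid decay of $\widehat{\psi}$ forces $\langle\eta+\lambda e_1\rangle\asymp\lambda$ on the bulk of the mass, yielding $\|f_\lambda\|_{H^t}^2\asymp\lambda^{2t}\|\psi\|_{L^2}^2$.

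Combining these, the triangle inequality gives
\[
\|Xf_\lambda\|_{L^2_{s-\varepsilon}}\;\geq\;\lambda\,\|f_\lambda\|_{L^2_{s-\varepsilon}}-\|g_\lambda\|_{L^2_{s-\varepsilon}}\;\gtrsim\;\lambda^{\,s-\varepsilon+1}-\lambda^{\,s-\varepsilon}\;\asymp\;\lambda^{\,s-\varepsilon+1},
\]
while $\|f_\lambda\|_{L^2_s}\asymp\lambda^{s}$, so that
\[
\frac{\|Xf_\lambda\|_{L^2_{s-\varepsilon}}}{\|f_\lambda\|_{L^2_s}}\;\gtrsim\;\lambda^{\,1-\varepsilon}\xrightarrow[\lambda\to\infty]{}\infty,
\]
contradicting the assumed uniform bound by the operator norm. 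I expect the main obstacle to be the rigorous justification of the asymptotics $\|f_\lambda\|_{L^2_t(G)}\asymp\lambda^t$: one must bridge the \emph{intrinsically} defined Sobolev norm (via the functional calculus of $\Delta_G$) and the \emph{explicit} frequency-side computation in a coordinate chart, controlling the error terms arising from the lower-order part of the operator $(\operatorname{Id}-\Delta_G)^{t/2}$ and from the variable-coefficient Riemannian metric. Everything else reduces to the routine wave-packet bookkeeping above.
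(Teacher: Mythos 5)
Your proposal is correct, but it follows a genuinely different route from the paper. The paper argues by composition with Bessel potentials: assuming boundedness (and reducing to $s=0$), it shows $R = X(\operatorname{Id}-\Delta_G)^{-\varepsilon/2}$ is bounded on $L^2(G)$ --- which requires pseudo-differential calculus, namely that the commutator $[X,(\operatorname{Id}-\Delta_G)^{-\varepsilon/2}]$ has order $-\varepsilon<0$ together with the Calder\'on--Vaillancourt theorem --- and then tests $R$ against the single family $(\operatorname{Id}-\Delta_{\R^n})^{-\delta/2}\delta_0$ in a chart, with $\delta=\frac{n}{2}+\lambda$ and $0<\lambda\le 1-\varepsilon$: since $\delta_0\in H^{-m}(\R^n)$ exactly when $m>\frac{n}{2}$ while $\partial_{x_j}\delta_0\in H^{-m}(\R^n)$ requires $m>\frac{n}{2}+1$, finiteness of the tested norm forces $\delta+\varepsilon>\frac{n}{2}+1$, contradicting $\delta+\varepsilon\le\frac{n}{2}+1$. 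Your wave-packet argument replaces this threshold arithmetic on delta regularizations by the asymptotics $\|e^{i\lambda x_1}\psi\|_{H^t}\asymp\lambda^t$ (Peetre's inequality for the upper bound, concentration of $\widehat{\psi}$ near the origin for the lower), which is elementary Fourier analysis once one grants the chart-equivalence of Sobolev norms --- the same standard equivalence the paper itself invokes, so the obstacle you flagged is routine (ellipticity of $(\operatorname{Id}-\Delta_G)^{t/2}$ plus invariance of $H^t$ under diffeomorphisms and smooth cutoffs, with constants depending only on the chart and $t$, uniformly in $\lambda$ since all packets share a fixed support). Your route buys three things: it avoids the commutator and Calder\'on--Vaillancourt machinery entirely; it treats all $s\in\R$ at once, whereas the paper's opening reduction from $L^2_s\to L^2_{s-\varepsilon}$ to $L^2\to L^2_{-\varepsilon}$ is asserted without justification and would itself need a conjugation-plus-commutator argument; and it is quantitative, exhibiting growth of the norm ratio at rate $\lambda^{1-\varepsilon}$. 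You are also more careful on one point: you straighten $X$ via the flow-box theorem near a point where $X\neq 0$, which is the correct justification in the general Riemannian setting, while the paper's remark that a rotation achieves $X\cong\partial_{x_j}$ is only adequate for constant-coefficient (e.g.\ left-invariant) fields. What the paper's approach buys in exchange is a single fixed test distribution rather than a limiting family, and a proof that isolates exactly how the hypothesis $\varepsilon<1$ enters, through the one-derivative gap in Sobolev regularity between $\delta_0$ and $\partial_{x_j}\delta_0$.
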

	
	The proof of this result, presented in Section \ref{Proof:mth:2}, employs techniques and results from pseudo-differential theory on compact manifolds. 
	
	A direct consequence of this theorem is that the sequence of eigenvalues associated with a left-invariant vector field on a compact Lie group has a subsequence that grows at a polynomial rate as the index tends to infinity. In Section \ref{application}, we apply this result to demonstrate that a class of Cauchy problems on a compact Lie group is ill-posed in $[0,T] \times G$.
	
	{\noindent\bf{Notation.}} 
	The notation $A\asymp B,$ indicate that there exist constants $C_1, C_2 > 0$, independent of the fundamental parameters, such that $C_1 B \leq A \leq C_2 B$.

	\section{Sobolev boundedness of vector fields}\label{Proof:mth:2} 
	
	In this section, we prove our main result, Theorem \ref{Lemma}. To illustrate our approach, let us first consider the case where $\varepsilon = 0$. Note that the vector field $X$ can be realized as an unbounded linear operator on $L^2(G)$. The fact that $X$ is not bounded on $L^2(G)$ can be demonstrated as follows
	
	There exists a local coordinate system $(\omega, \phi)$ where $\omega \subset G$, and $\phi: \omega \to \phi(\omega) \subset \R^d$ such that  $0 \in \phi(\omega)$ and that the vector field $X$ is identified with one of the canonical vector fields $\partial_{x_j}$ of $\mathbb{R}^d$. Using the identification $\omega \cong \phi(\omega)$, we have $X \cong \partial_{x_j}$. 
	
	Define the sequence of compactly supported functions on $\phi(\omega)$ by 
	$$
	f_n(x) = e^{i nx_j} \psi_\omega(x), \quad n \in \mathbb{N},
	$$ 
	where $\psi_\omega$ is a smooth, positive, and compactly supported function on $\phi(\omega)$.

	Observe that 
	$$
	\|f_n \circ \phi\|_{L^2(\omega)} \asymp \|f_n\|_{L^2(\phi(\omega))} = \|\psi_\omega\|_{L^2(\phi(\omega))} <\infty.
	$$ 
	
	However,
	$$
	\|X (f_n \circ \phi)\|_{L^2(\omega)} \asymp \|X (f_n \circ \phi) \circ \phi^{-1}\|_{L^2(\phi(\omega))} \asymp \|\partial_{x_j} f_n\|_{L^2(\phi(\omega))},
	$$ 
	and
	$$
	\|\partial_{x_j} f_n\|_{L^2(\phi(\omega))} = \|e^{i n x_j} \partial_{x_j} \psi_\omega + i n e^{i n x_j} \psi_\omega\|_{L^2(\phi(\omega))}.
	$$ 
	
	Since 
	$$
	\left| e^{i nx_j} \partial_{x_j} \psi_\omega + i n e^{i nx_j} \psi_\omega \right| 
	\geq n |\psi_\omega| - |\partial_{x_j} \psi_\omega|,
	$$ 
	it follows that as $n \to \infty$, we have $\|X (f_n \circ \phi)\|_{L^2(\omega)} \to \infty$. 
	
	Consequently, since the sequence $\{f_{n} \circ \phi^{-1}\}$ remains bounded in $L^2(\phi(\omega))$, this establishes that $X$ is not a bounded operator on $L^2(\phi(\omega))$. Consequently, $X$ is not a bounded operator on $L^2(G)$.
	
	We generalize this idea to prove Theorem \ref{Lemma} for $0 \leq \varepsilon < 1$ by employing the pseudo-differential calculus on the manifold $G$ alongside the localization technique illustrated above. For the basic aspects of pseudo-differential operators, we refer the reader to Ruzhansky and Turunen \citep{Ruz}.
\begin{proof}[Proof of Theorem \ref{Lemma}]
		Assume that $0 \neq X : L^2_s(G) \to L^2_{s-\varepsilon}(G)$ is bounded, where $\varepsilon \in [0,1)$. In particular, this implies that 
		$$
		X : L^2(G) \to L^2_{-\varepsilon}(G)
		$$ 
		is bounded, meaning there exists a constant $C' > 0$ such that
		$$
		\| X f\|_{L^2_{-\varepsilon}}=\|(\operatorname{Id}-\Delta_G)^{-\frac{\varepsilon}{2}} X f\|_{L^2} \leq C' \|f\|_{L^2}.
		$$ 
        Let us choose an atlas  (consisting of a family of local coordinate systems) $(U_\omega,\phi_\omega)$ for $G,$ with the open sets $U_\omega$ providing a finite covering of $G.$ Let $\tilde{U}_\omega=\phi_\omega(U_\omega)$ be the image of the diffeomorphism $\phi_\omega: {U}_\omega\subset G\rightarrow \tilde{U}_\omega\subset  \mathbb{R}^n. $ 
        Consider the Euclidean Bessel potential $(\operatorname{Id}-\Delta_{\mathbb{R}^n})^{-\frac{\delta}{2}}$ defined by 
        $$(\operatorname{Id}-\Delta_{\mathbb{R}^n})^{-\frac{\delta}{2}} u(x)=\int e^{2\pi ix\cdot \xi}(1+\vert\xi\vert)^{-\frac{\delta}{2}}\widehat{u}(\xi)d\xi,\,\forall u\in C^\infty(\tilde{U}_\omega),\quad \,x\in \tilde{U}_\omega,$$ where $ \widehat{u} := \widehat{u}(\xi),$ denotes the Euclidean Fourier transform of $u$,  and $u$ is extended by zero outside of $\tilde U_\omega$, which we assume to be precompact. 
        
		The operator
        $(\operatorname{Id}-\Delta_{\mathbb{R}^n})^{-\frac{\delta}{2}} $ induces to an elliptic  operator $(\operatorname{Id}-\Delta)^{-\frac{\varepsilon}{2}}$ on $C^\infty(G),$ determined by its action on smooth functions of each local coordinate system via,
        \begin{equation}
           (\operatorname{Id}-\Delta)^{-\frac{\varepsilon}{2}}\phi:= (  (\operatorname{Id}-\Delta_{\mathbb{R}^n})^{-\frac{\delta}{2}} \phi\circ {\phi_\omega^{-1})\circ \phi_\omega},\forall \phi\in C^\infty(U_\omega).
        \end{equation} Then, this local definition of the operator can be extended to the whole manifold $G$ by using a partition of unity $\{\chi_\omega\}_\omega,$ and then, $\sum_\omega\chi_\omega=1,$ subordinated to the family of open sets $U_\omega,$ meaning that $\chi_\omega$ has its support contained in each $U_\omega.$

Due to the equivalent definitions of Sobolev spaces in compact manifolds, the Sobolev norms given by \eqref{Sobolev_norm} are equivalent to the ones given by 
    \begin{equation*}
        \|f\|'_{L^2_s}=\sum_{\tau=1}^m \|(\operatorname{Id}-\Delta_{\R^n})^{s/2} f_\tau\|_{L^2(\R^n)},
    \end{equation*}
    where $f_\tau = (\phi_\tau \cdot f)\circ \varphi_\tau^{-1}$, extended by zero outside $\tilde U_\tau=\varphi_\tau(U_\tau)$, $\{\varphi_\tau,U_\tau\}_{\tau=1}^m$ denotes a finite covering by local charts of $G$ and $\{\phi_\tau\}_{\tau=1}^m$ denotes a partition of unity subordinate to such covering. Hence $\|f\|_{L^2_\varepsilon(U)}\asymp \|(\operatorname{Id}-\Delta)^{\varepsilon/2}f\|_{L^2(U)}$. Therefore 
    \begin{align*}
       \| (\operatorname{Id}-\Delta)^{-\frac{\varepsilon}{2}}Xf\|_{L^2}\asymp \|Xf\|_{L^2_{-\varepsilon}}\leq C'\|f\|_{L^2}.
    \end{align*}

A consequence of this estimate is that $(\operatorname{Id}-\Delta)^{-\frac{\varepsilon}{2}} X: L^2(G)\rightarrow L^2(G)$ is bounded. Note also that
		$$
		R = X (\operatorname{Id}-\Delta)^{-\frac{\varepsilon}{2}} : L^2(G) \to L^2(G)
		$$         
is also a bounded operator. To prove this fact, note that
		\begin{align*}
			R &= X (\operatorname{Id}-\Delta)^{-\frac{\varepsilon}{2}} - (\operatorname{Id}-\Delta)^{-\frac{\varepsilon}{2}} X + (\operatorname{Id}-\Delta)^{-\frac{\varepsilon}{2}} X \\&= [X, (\operatorname{Id}-\Delta)^{-\frac{\varepsilon}{2}}] + (\operatorname{Id}-\Delta)^{-\frac{\varepsilon}{2}} X,
		\end{align*}
		where $[X, (\operatorname{Id}-\Delta)^{-\frac{\varepsilon}{2}}]$ denotes the commutator of $X$ and $(\operatorname{Id}-\Delta)^{-\frac{\varepsilon}{2}}$.

		Note that $(\operatorname{Id}-\Delta)^{-\frac{\varepsilon}{2}}$ is a pseudo-differential operator of order $-\varepsilon \leq 0$, which is a bounded operator on $L^2(G)$ by the Calder\'on-Vaillancourt theorem, see H\"ormander \citep{hormander1985analysis}. Furthermore, since the order of $[X, (\operatorname{Id}-\Delta_G)^{-\frac{\varepsilon}{2}}]$ is also $-\varepsilon<0$ it is also bounded on $L^2(G)$, and so we have that $R : L^2(G) \to L^2(G)$ is also a bounded operator. This means there exists a constant $C > 0$ such that:
		$$
		\|R f\|_{L^2} \leq C \|f\|_{L^2}.
		$$ Let us fix a chart $(U,\phi)=(U_{\omega_0},\phi_{\omega_0})$. Without loss of generality, assume that $y = 0$ is an interior point of $\tilde U=\phi(U)$ which we assume to have compact closure $\overline{\tilde U}$ in $\mathbb{R}^n$.  By applying a rotation if necessary, we may further assume that $X \cong \partial_{x_j} $ on $\tilde U$, corresponding to one of the partial derivatives with respect to the canonical coordinates.

		Let $\delta_0$ be the Dirac delta distribution at $0 \in \mathbb{R}^n$. Since $\delta_0\in H^{m}(\R^n)$ for $m<-n/2$, where $H^m(\R^n)$ denotes the classical Sobolev space of order $m$ in $\R^n$, and since $\operatorname{supp}(\delta)=\{0\}$,  we have that $(\operatorname{Id}-\Delta_{\mathbb{R}^n})^{-\frac{\delta}{2}}\delta_0\in L^2(\tilde U)$ for $\delta>n/2$.

         Since $(\operatorname{Id}-\Delta)^{-\frac{\delta}{2}}=  (\operatorname{Id}-\Delta_{\mathbb{R}^n})^{-\frac{\delta}{2}}$ in local coordinates, for $\delta>n/2$ we get that 
        \begin{align*}
        \|X(\operatorname{Id}-\Delta)^{-\frac{\varepsilon+\delta}{2}}\delta_0\circ\phi\|_{L^2(U)}&=\|X(\operatorname{Id}-\Delta)^{-\frac{\varepsilon}{2}}(\operatorname{Id}-\Delta)^{-\frac{\delta}{2}}\delta_0\circ\phi\|_{L^2(U)}\\
        &=\|R(\operatorname{Id}-\Delta)^{-\frac{\delta}{2}}\delta_0\circ\phi\|_{L^2(U)}\\
        &\leq C\|(\operatorname{Id}-\Delta)^{-\frac{\delta}{2}}\delta_0\circ\phi\|_{L^2(U)}\\&\asymp\|(\operatorname{Id}-\Delta_{\mathbb{R}^n})^{-\frac{\delta}{2}}\delta_0\|_{L^2(\tilde U)}<\infty.    
        \end{align*}

        Since $ 0 \leq \varepsilon < 1 $ and $ \delta > 0 $, we can choose
		\begin{equation*}
			\delta = \frac{n}{2} + \lambda,
		\end{equation*}
		where $ \lambda > 0 $ satisfies
		\begin{equation}\label{ref:e}
			0 < \lambda \leq 1 - \varepsilon.
		\end{equation}		
        Hence
\begin{align*}
 \infty>\Vert X (\operatorname{Id}-\Delta)^{-\frac{\varepsilon+\delta}{2}} \delta_0\circ \phi\Vert_{L^2( U)} &\asymp \Vert \partial_{x_j} (\operatorname{Id}-\Delta_{\R^n})^{-\frac{\varepsilon+\delta}{2}} \delta_0  \Vert_{L^2(\tilde U)}\\
 &= \Vert (\operatorname{Id}-\Delta_{\R^n})^{-\frac{\varepsilon+\delta}{2}}  \partial_{x_j} \delta_0   \Vert_{L^2(\tilde U)}\\
 &= \|\partial_{x_j}\delta_0\|_{H^{-(\varepsilon+\delta)}(\tilde U)}.
\end{align*}
Consequently, since $\partial_{x_j}\delta_0\in H^{-m}(\R^n)$ for $m>\frac{n}{2}+1$, we must satisfy the condition	\begin{equation}\label{L2:condition}
			\delta+\varepsilon > \frac{n}{2}+1.
		\end{equation}		
		However, observe that
		\begin{align*}
			\delta+\varepsilon=\frac{n}{2}+\lambda+\varepsilon\leq \frac{n}{2}+1.
		\end{align*}
		This contradicts \eqref{L2:condition}. Therefore, we conclude that there does not exist $ \varepsilon \in [0,1) $ such that $ X: L^2_s(G) \to L^2_{s-\varepsilon}(G) $ is bounded.
	\end{proof}

	\section{Application: Ill-posedness of a Cauchy problem associated with a left-invariant vector field}\label{application}
	In this section, we present an application of our main result, Theorem \ref{Lemma}. The notations used here are consistent with those introduced by Ruzhansky and Turunen \citep[Chapter 10]{Ruz}.
	
	Let $ G $ be a compact Lie group with normalized Haar measure $ dx $, and let $ \widehat{G} $ denote its unitary dual (the set of equivalence classes of irreducible continuous unitary representations of $ G $). Consider a left-invariant vector field $ X \neq 0 $ on $ G $. We will use Theorem \ref{Lemma} to establish the following result.
	
	\begin{theorem}\label{Th.2}
		For any $T>0$ there exist initial data $u_0\in L^2(G)$ such that the Cauchy problem
		\begin{equation}\label{eqcauchyproblem}
			\begin{cases}
				\partial_t u(t,x) + i X u(t,x) = 0, & (t,x) \in \R_+ \times G, \\
				u(0,x) = u_0(x), & x \in G,
			\end{cases}
		\end{equation}
		does not admit solution $u\in C([0,T],L^2_s(G))\cap C^1((0,T),L^2_s(G))$ for any $s\in\R$.
	\end{theorem}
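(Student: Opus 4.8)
The plan is to transfer the problem to the Fourier side furnished by the Peter–Weyl theorem and to feed in the polynomial growth of the eigenvalues of $X$ that is recorded, as a consequence of Theorem \ref{Lemma}, in the remark following it. Since $X\neq 0$ is left-invariant, it is a Fourier multiplier: on the $\xi$-isotypic component of $[\xi]\in\widehat G$ it acts by the matrix $\sigma_X(\xi)=\xi(X)$ (the infinitesimal representation), which is skew-Hermitian because $\xi$ is unitary, so its eigenvalues are purely imaginary, say $i\lambda$ with $\lambda\in\R$. Writing $\langle\xi\rangle$ for the scalar by which $(\mathrm{Id}-\Delta_G)^{1/2}$ acts on that component, boundedness of $X\colon L^2_s(G)\to L^2_{s-\varepsilon}(G)$ is equivalent to $\sup_{[\xi]}\langle\xi\rangle^{-\varepsilon}\|\sigma_X(\xi)\|_{\mathrm{op}}<\infty$, and $\|\sigma_X(\xi)\|_{\mathrm{op}}$ equals the largest modulus of an eigenvalue $i\lambda$. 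Applying Theorem \ref{Lemma} with $\varepsilon=\tfrac12$, the failure of this bound yields a subsequence $[\xi_k]$ with $\langle\xi_k\rangle\to\infty$ and an eigenvalue $i\lambda_k$ of $\sigma_X(\xi_k)$ satisfying $|\lambda_k|\ge\langle\xi_k\rangle^{1/2}$. Because $X$ has real coefficients, complex conjugation sends an eigenvalue $i\lambda$ on $[\xi]$ to $-i\lambda$ on the conjugate representation $[\overline\xi]$, for which $\langle\overline\xi\rangle=\langle\xi\rangle$; replacing $\xi_k$ by $\overline{\xi_k}$ when needed and passing to a further subsequence, I may assume the $[\xi_k]$ pairwise inequivalent with $\lambda_k\to+\infty$ and $\lambda_k\ge k$.

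Next I would show that any admissible solution is forced to coincide with the separated-variables one. Let $e_k$ be an $L^2$-normalized matrix coefficient in the $\xi_k$-component with $Xe_k=i\lambda_k e_k$; for inequivalent $\xi_k$ these are orthonormal and $\|e_k\|_{L^2_s}=\langle\xi_k\rangle^{s}$. Suppose $u\in C([0,T],L^2_s(G))\cap C^1((0,T),L^2_s(G))$ solves \eqref{eqcauchyproblem}. Pairing the equation with the smooth function $e_k$ and using $X^\ast=-X$ (left-invariant fields are skew-adjoint for the Haar measure) gives, for $c_k(t):=\langle u(t),e_k\rangle$, the scalar ODE $c_k'(t)=\lambda_k c_k(t)$ on $(0,T)$; continuity at $t=0$ forces $c_k(0)=\langle u_0,e_k\rangle$. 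Hence $c_k(t)=\langle u_0,e_k\rangle\,e^{\lambda_k t}$, and since $u(t)\in L^2_s(G)$ the necessary condition $\sum_k |\langle u_0,e_k\rangle|^2 e^{2\lambda_k t}\langle\xi_k\rangle^{2s}<\infty$ must hold for each $t\in[0,T)$.

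It then remains to choose $u_0$ violating this for every $t>0$ and every $s$. Put $u_0=\sum_k a_k e_k$ with $a_k=e^{-\sqrt{\lambda_k}}$, so that $\sum_k|a_k|^2=\sum_k e^{-2\sqrt{\lambda_k}}\le\sum_k e^{-2\sqrt{k}}<\infty$ and $u_0\in L^2(G)$. For fixed $t>0$ and $s\in\R$, using $\langle\xi_k\rangle\le\lambda_k^{2}$ one estimates the logarithm of the general term,
\begin{equation*}
\log\!\left(|a_k|^2 e^{2\lambda_k t}\langle\xi_k\rangle^{2s}\right)\ge 2\lambda_k t-2\sqrt{\lambda_k}-4|s|\log\lambda_k\xrightarrow[k\to\infty]{}+\infty ,
\end{equation*}
so the series diverges. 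This contradicts the necessary condition of the previous step, whence no solution exists in $C([0,T],L^2_s(G))\cap C^1((0,T),L^2_s(G))$ for any $s\in\R$, which is the assertion.

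The heart of the matter, and the step I expect to be the main obstacle, is the first one: converting the qualitative conclusion of Theorem \ref{Lemma} into a quantitative \emph{polynomial} lower bound $|\lambda_k|\gtrsim\langle\xi_k\rangle^{\beta}$ with a fixed $\beta>0$. This is exactly where the sharp range $\varepsilon<1$ is essential — a merely logarithmic growth of the eigenvalues could be absorbed by taking $s$ very negative in the last step, so that the exponential factor $e^{\lambda_k t}$ would no longer outpace the Sobolev weight $\langle\xi_k\rangle^{s}$ and the divergence would break down. Two further points need care but are routine: the termwise ODE and the recovery of the initial datum rely on the $C([0,T],L^2_s)\cap C^1((0,T),L^2_s)$ regularity together with the smoothness of the test functions $e_k$, and the reduction to $\lambda_k>0$ uses the reality of $X$ through conjugate representations.
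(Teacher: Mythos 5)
Your proposal is correct and takes essentially the same route as the paper: you apply Theorem \ref{Lemma} with $\varepsilon=\tfrac12$ to extract a sequence of inequivalent representations whose $X$-eigenvalues grow like $\langle\xi_k\rangle^{1/2}$, fix the sign using conjugate representations, solve the Fourier-mode ODE to obtain growth $e^{\lambda_k t}$, and pick an $L^2$ datum with sub-exponentially decaying coefficients to contradict $u(t)\in L^2_s(G)$ via Plancherel/Bessel. The only differences are cosmetic and harmless: you derive the termwise ODE by pairing against scalar eigen-matrix-coefficients and skew-adjointness of $X$ instead of taking the full matrix group Fourier transform, and your datum $a_k=e^{-\sqrt{\lambda_k}}$ is $T$-independent (giving blow-up for every $t>0$), whereas the paper tunes $\widehat{u_0}(\xi_n)_{11}=e^{-\frac{CT}{2}\langle\xi_n\rangle^{1/2}}$ to the given $T$ and concludes blow-up for $t\geq\tfrac{T}{2}$.
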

	
	\begin{proof}
		Suppose there exists a solution $u\in C([0,T],L^2_s(G))\cap C^1((0,T),L^2_s(G))$  to \eqref{eqcauchyproblem}. Observe that $ i X $ is a symmetric operator on $ L^2(G) $. For any function $ f \in C^\infty(G) $, we can associate a matrix symbol to $ i X $ using the quantization formula:
		\begin{equation*}
			i X f(x) = \sum_{[\xi] \in \widehat{G}} \dim(\xi) \operatorname{Tr}\big[\xi(x) \sigma_{i X}(\xi) \widehat{f}(\xi)\big],
		\end{equation*}
		where $ \dim(\xi) $ denotes the dimension of the representation $ \xi $, and $ \widehat{f}(\xi) $ is the Fourier transform of $ f $ at $ \xi $, defined as
		\begin{equation*}
			\widehat{f}(\xi) = \int_G f(x) \xi(x)^{-1} \, dx.
		\end{equation*}
		
		The matrix symbol $ \sigma_{i X}(\xi) $ of $ i X $ is diagonalizable by unitary matrices. Thus, for each $ [\xi] \in \widehat{G} $, there exists a representative of $ [\xi] $, which we will also denote by $ \xi $, such that $ \sigma_{i X}(\xi) $ is diagonal. Specifically
		\begin{equation*}
			\sigma_{i X}(\xi)_{\alpha \beta} = \mu_\alpha(\xi) \delta_{\alpha \beta}, \quad 1 \leq \alpha, \beta \leq \dim(\xi),
		\end{equation*}
		where $ \delta_{\alpha \beta} $ is the Kronecker delta, and $ \mu_\alpha(\xi) $ are real eigenvalues associated with the action of $ i X $.

		Now, as $iX$ is not bounded from $L^2_s(G)$ to $L^2_{s-\frac{1}{2}}(G)$, there must exist a sequence of distinct terms $([\xi_n])_{n \in \mathbb{N}}$ in $\widehat{G}$ and a constant $C > 0$ such that
		\begin{equation*}
			|\mu_{\alpha(n)}(\xi_n)| \geq C\langle \xi_n\rangle^{\frac{1}{2}},
		\end{equation*}
		for some $1 \leq \alpha(n) \leq \dim(\xi_n)$ and every $n \in \mathbb{N}$. The quantity $\langle \xi \rangle$ refers to the standard weight associated with the representation $\xi$ and its definition in the context of Lie groups can be found in \citep[page 538]{Ruz}.

		Indeed, suppose this is not the case. Then, there would exist a sufficiently large constant $ C > 0 $ such that
		\begin{equation*}
			|\mu_{\alpha}(\xi)| \leq C\langle \xi\rangle^{\frac{1}{2}},
		\end{equation*}
		for every $ [\xi] \in \widehat{G} $ and $ 1 \leq \alpha \leq \dim(\xi) $. Then for $ v \in L^2_s(G)$, we have that
		\begin{align*}
			\|iXv\|_{L_{s-\frac{1}{2}}^2}^2 &= \sum_{[\xi] \in \widehat{G}} \dim(\xi) \sum_{1 \leq \alpha, \beta \leq \dim(\xi)} \langle \xi\rangle^{2s-1} |\mu_{\alpha}(\xi)|^2 |\widehat{v}(\xi)_{\alpha\beta}|^2 \\
			&\leq C \sum_{[\xi] \in \widehat{G}} \dim(\xi) \sum_{1 \leq \alpha, \beta \leq \dim(\xi)} \langle \xi\rangle^{2s} |\widehat{v}(\xi)_{\alpha\beta}|^2 
			= \|v\|_{L_{s}^2}^2,
		\end{align*}
		implying that $ iX:L^2_s(G) \to L^2_{s-\frac{1}{2}}(G) $ is bounded, which contradicts our assumption.
		
		By composing $ \xi_n $ with a change of variables, we may further assume that $ \alpha(n) = 1 $ for all $ n \in \mathbb{N} $. Moreover, since $ \mu_{\alpha}(\overline{\xi}) = -\mu_\alpha(\xi) $, we may also assume that 
		\begin{equation}\label{ineqArchi}
			\mu_{1}(\xi_n) \geq C\langle \xi_n\rangle^{\frac{1}{2}},
		\end{equation}
		for all $ n \in \mathbb{N} $.
		
		Returning to the Cauchy problem, taking the group Fourier transform in $ G $ on \eqref{eqcauchyproblem} yields:
		\begin{equation*}
			\begin{cases}
				\partial_t \widehat{u}(t,\xi)_{\alpha\beta} + \mu_{\alpha}(\xi) \widehat{u}(t,\xi)_{\alpha\beta} = 0, & (t,\xi) \in [0,T] \times \widehat{G}, \\
				\widehat{u}(0,\xi)_{\alpha\beta} = \widehat{u_0}(\xi)_{\alpha\beta}, & \xi \in \widehat{G}, 
			\end{cases}\\[2mm]
		\end{equation*}
		for $ 1 \leq \alpha, \beta \leq \dim(\xi) $. 
		
		From this, it follows that
		\begin{equation}\label{eqwidehat}
			\widehat{u}(t,\xi)_{\alpha\beta} = e^{\mu_{\alpha}(\xi)t}\widehat{u_0}(\xi)_{\alpha\beta},
		\end{equation}
		for every $ \xi \in \widehat{G}, 1 \leq \alpha, \beta \leq \dim(\xi), t \in [0,T] $.
		
		Consider $ u_0 \in L^2(G) $ defined by:
		\begin{equation*}
			\widehat{u_0}(\xi_n)_{11} = e^{-\frac{CT}{2}\langle \xi_n\rangle^{\frac{1}{2}}},
		\end{equation*}
		for every $ n \in \mathbb{N} $, and $ \widehat{u_0}(\xi)_{\alpha\beta} = 0 $ otherwise. In fact, $ u_0 \in C^\infty(G) \subset L^2(G) $.
		
		From \eqref{eqwidehat} and \eqref{ineqArchi}, we conclude that
		\begin{equation*}
			|\widehat{u}(t,\xi_n)_{11}| \geq e^{C(t-\frac{T}{2})\langle\xi_n\rangle^{\frac{1}{2}}},
		\end{equation*}
		for every $ n \in \mathbb{N} $. Thus, $ u(t, \cdot) \not\in L^2_s(G)$ for $ t \geq \frac{T}{2} $ and any $s\in\R$, by Plancherel's identity, leading to a contradiction. We conclude that no solution $u$ exists in this case, proving the claim. \end{proof}

	\bmhead{Acknowledgements}
	Duván Cardona and André Kowacs were supported by the FWO Odysseus 1 grant G.0H94.18N: Analysis and Partial Differential Equations and by the Methusalem programme of the Ghent University Special Research Fund (BOF) (Grant number 01M01021). Duván Cardona was also supported by the Research Foundation-Flanders (FWO) under the postdoctoral grant No. 1204824N. André Kowacs was supported in part by the Coordenação de Aperfeiçoamento de Pessoal de Nível Superior - Brasil (CAPES) - Finance Code 001. Alexandre Kirilov and Wagner de Moraes were supported by the National Council for Scientific and Technological Development - CNPq, Brasil (grants 316850/2021-7 and 423458/2021-3).

\end{document}